\itshape\textbf{#1}}
\newtheorem{thm}{Theorem}[section]
\newtheorem{lem}[thm]{Lemma}
\newtheorem{cor}[thm]{Corollary}
\newtheorem*{theorem*}{Theorem}
\theoremstyle{definition}
\newtheorem{dfn}[thm]{Definition}
\newtheorem{exam}[thm]{Example}
\newtheorem{rem}[thm]{Remark}
\newcommand{\ZZ}{\ensuremath{\mathbb{Z}}}
\begin{document}
\title{Lifting of cycles in functional graphs}

\author[T. Nara]{Tadahisa Nara}
\address{Faculty of Information Design, Tokyo Information Design Professional University, 2-7-1 Komatsugawa, Edogawa-ku, Tokyo, 132-0034, Japan} \email{nara@tid.ac.jp}

\begin{abstract}
For a given function from a set to itself, we can define a directed graph called the functional graph, 
where the vertices are the elements of the set, 
and the edges are all the pairs of inputs and outputs for the function. 
In this article we consider functional graphs on $\ZZ/ m\ZZ$ with respect to polynomial functions.
The main result describes the behavior of cycles in functional graphs on $\ZZ/p^n \ZZ$ 
while $n$ is increasing, where $p$ is a prime number.
\end{abstract}

\keywords{
    functional graph,
    cycle, 
    polynomial map, 
    arithmetic dynamics, 
    finite ring%
}

\subjclass{
37P25, %
05C38, %
11T06%
}

\maketitle

\section{Introduction}

For a given function from a set to itself, we can define a directed graph called the functional graph, 
where the vertices are the elements of the set, 
and the edges are all the pairs of inputs and outputs for the function.
In recent years functional graphs on finite sets are actively studied, 
in particular concerning polynomial functions or rational functions.
On finite fields many remarkable results are given  
from multiple perspectives. 
For example,
\cite{konyagin_functional_2016, bach_number_2013} consider the number of non-isomorphic functional graphs 
and give bounds for that. 
\cite{%
flynn_graph_2014, mans_functional_2019} consider structural aspect of functional graphs, such as cycles or connected components.

In this article we consider functional graphs on the finite rings $\ZZ / m\ZZ$ with respect to polynomial functions.
As shown in Theorem \ref{thm:graph-isom} the structure of a functional graph is determined only from 
that on the primary components of $\ZZ / m\ZZ$.
However, the rules that govern the latter is not clear, and that is why we are interested in functional graphs on $\ZZ/p^n \ZZ$.
The main result is a description on behavior of cycles in functional graphs on $\ZZ/p^n \ZZ$ 
while $n$ is increasing (Theorem \ref{thm:main-0}). 
In that study we can find some sort of differential information on the function is a key for classifying the behavior.
In \cite{yoshioka_properties_2018} a similar problem is considered, focusing on 
the size of cycles and the expression of the vertices, but limited to the case for Chebyshev polynomials from a cryptographic perspective.

As a related study,  
counting the number of functions realized as polynomial functions, which corresponds to the number of functional graphs given by polynomials, 
is a classical problem 
(\cite{singmaster_polynomial_1974, keller_counting_1968}). 
In \cite{specker_ring_2023} the ring of functions realized as polynomial functions was investigated and the structure was described. 
In \cite{chen_polynomial_1995} a kind of polynomial representable functions 
from $\{0,1,..., m-1\}$ to $\{0,1,..., n-1\}$ was proposed for $m\neq n$, 
and it was studied in more sophisticated formulation and generalized to residue class rings of Dedekind domains in \cite{li_polynomial_2019}.

Alternatively, permutation polynomials, that is, polynomials which are bijective as functions,
are widely studied in this area (\cite{singh_method_2021, gorcsos_permutation_2018}).

\section{Preliminaries}
Throughout the paper, we denote the ring of integers modulo $m$ by $\ZZ_m:=\ZZ/ m\ZZ$ and $p$ is any prime number unless otherwise mentioned. 
Let $f: \ZZ \to \ZZ$ be a polynomial function, and then we can naturally regard $f$ as a function from $\ZZ_m$ to $\ZZ_m$ for any $m$. 
So by abusing a notation we use the same symbols for such functions 
for different domains.
We denote a functional graph of a polynomial function $f$ on $\ZZ_m$ by
\[
G(f, \ZZ_m) := (V, E),
\]
where vertices $V=\ZZ_m$ and the directed edges $E=\{(v, f(v)): v \in V\}$.
Also in this article the notation $f^i$ means the $i$-th iteration of a function $f$.

\begin{rem}
A functional graph must have outdegree $1$ for any vertex, and so 
it is represented by a set of edges in the form 
\[
\{(0, b_0), (1,b_1)),..., (m-1,b_{m-1})\},\quad b_i \in \ZZ_m.
\]
On the other hand, not all the graphs in the form are given by polynomial functions 
unless $m$ is a prime.
Concerning the number of functional graphs actually given by polynomial functions, see 
\cite{singmaster_polynomial_1974, keller_counting_1968, specker_ring_2023},
and also \cite{bulyovszky_polynomial_2017}, which considers conditions whether a given function can be realized as a polynomial function.

\end{rem}

\section{Fundamental Results}

The following theorem is a fundamental fact
for our situation,
which says the structure of a functional graph on $\ZZ_m$ is 
determined only from that on the primary components of $\ZZ_m$.

\begin{thm}\label{thm:graph-isom}
Let $m,n$ be coprime integers and $f: \ZZ \to \ZZ$ a polynomial function.
Then we have a graph isomorphism 
\begin{align*}
G(f, \ZZ_m) \otimes G(f, \ZZ_n) \simeq G(f, \ZZ_{mn}),
\end{align*}
where the left-hand side is a tensor product of graphs.
\end{thm}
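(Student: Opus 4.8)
The plan is to realize the isomorphism through the Chinese Remainder Theorem. Since $\gcd(m,n)=1$, reduction induces a ring isomorphism $\phi\colon \ZZ_{mn}\to \ZZ_m\times\ZZ_n$, $\phi(v)=(v\bmod m,\ v\bmod n)$, which in particular is a bijection between the two vertex sets $\ZZ_{mn}$ and $\ZZ_m\times\ZZ_n$. I would take $\phi$ itself as the candidate graph isomorphism and verify that it carries the edge set bijectively onto the edge set.

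The organizing observation is that both graphs in question are \emph{functional} graphs, so each carries a well-defined successor function, and a bijection of vertex sets is a graph isomorphism precisely when it intertwines these successors. On $G(f,\ZZ_{mn})$ the successor of $v$ is $f(v)$. On the tensor product $G(f,\ZZ_m)\otimes G(f,\ZZ_n)$, a vertex $(a,b)$ has a unique out-neighbour, namely $(f(a),f(b))$: by definition of the tensor product its out-neighbours are the pairs $(c,d)$ with $(a,c)$ an edge of the first factor and $(b,d)$ an edge of the second, and each factor, being functional, pins down $c=f(a)$ and $d=f(b)$ uniquely. Hence the tensor product is again a functional graph whose successor map is $(a,b)\mapsto (f(a),f(b))$, and it remains only to check the single identity
\[
\phi(f(v))=\bigl(f(v\bmod m),\ f(v\bmod n)\bigr).
\]

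The decisive step, and the only point at which the hypothesis on $f$ enters, is the compatibility of a polynomial map with reduction: since $f$ has integer coefficients, $f(v)\bmod m=f(v\bmod m)$ for every $v$, and likewise modulo $n$. This is exactly the statement that the reduction map $\ZZ\to\ZZ_m$ is a ring homomorphism under which $f$ is equivariant; for a general, non-polynomial function it would fail, which is why the polynomial assumption is essential here and not a mere convenience. Granting this, the displayed identity is immediate, $\phi$ intertwines the two successor functions, and since $\phi$ is already a bijection on vertices the edge correspondence follows automatically. I do not expect a genuine obstacle beyond making the successor-function viewpoint precise; the entire content is the equivariance of $f$ under reduction combined with the Chinese Remainder Theorem.
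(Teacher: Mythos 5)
Your proof is correct and takes essentially the same approach as the paper: both rest on the Chinese Remainder Theorem bijection together with the equivariance of a polynomial map under reduction modulo $m$ and $n$. The only difference is cosmetic — you use the reduction direction $\ZZ_{mn}\to\ZZ_m\times\ZZ_n$ and phrase the edge check as intertwining of successor functions, while the paper writes the inverse map $(x,y)\mapsto bnx+amy$ explicitly and verifies the edge equivalence directly.
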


\begin{proof}
We claim that the isomorphism of the Chinese remainder theorem (CRT) induces the graph isomorphism.
Indeed, let $\phi$ be the isomorphism 
\begin{align}
\phi: \ZZ_m \times \ZZ_n \to \ZZ_{mn}
\end{align}
induced by $(x, y) \mapsto bnx+amy$ for $x,y\in\ZZ$, 
where $a,b\in\ZZ$ are such that $am+bn=1$.
Then this naturally induces the map
\[
\phi_G: G(f, \ZZ_m) \otimes G(f, \ZZ_n) \to G(f, \ZZ_{mn}).
\]

By definition $((x\pmod m,y \pmod n),(x'\pmod m,y'\pmod n))$ is a directed edge of $G(f, \ZZ_m) \otimes G(f, \ZZ_n)$ 
if and only if $(x\pmod m, x'\pmod m)$ and $(y\pmod n, y'\pmod n)$ are directed edges of  
$G(f, \ZZ_m)$ and $G(f, \ZZ_n)$, respectively.
So, to show $\phi_G$ is an isomorphism, we have to prove the equivalence
\begin{align*}
&f(x) =x' \pmod m,\ f(y)=y' \pmod n\\
&\qquad \Longleftrightarrow 
f(bnx+amy) =bnx'+amy' \pmod {mn}.
\end{align*}
Note, because of $\gcd(m,n)=1$, the RHS is 
equivalent to 
\begin{align*}
f(bnx+amy) =bnx'+amy'
\end{align*}
both modulo $m$ and $n$. 
Since $f$ is a polynomial, we have
\begin{align*}
f(bnx+amy) &= f((1-am)x+amy)=f(x) \pmod m,\\
bnx'+amy' &= (1-am)x'+amy'=x' \pmod m 
\end{align*}
and similarly we have
\[
f(bnx+amy)=f(y) \pmod n,\ 
bnx'+amy'=y' \pmod n, 
\]
which implies the desired equivalence.
\end{proof}

From now on we frequently use the notation
\[
C = \{v_0, v_1, \dots, v_{k-1}\}
\]
for a cycle, which precisely means the directed graph $(V, E)$, where
\[
V =\{v_0, v_1, \dots, v_{k-1}\}, \quad E = \{(v_0, v_1), (v_1, v_2),\dots, (v_{k-1}, v_0)\}.
\]

\begin{cor}
If there is a cycle of size $k$ in $G(f, \ZZ_m)$ and a cycle of size $l$ in $G(f, \ZZ_n)$ with $\gcd(m,n)=1$, 
then there is a cycle of size $\operatorname{lcm}(k,l)$ in $G(f, \ZZ_{mn})$.
\end{cor}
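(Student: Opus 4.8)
The plan is to invoke Theorem~\ref{thm:graph-isom}, which supplies a graph isomorphism $G(f, \ZZ_m) \otimes G(f, \ZZ_n) \simeq G(f, \ZZ_{mn})$. Since any graph isomorphism carries cycles to cycles of the same size, it suffices to exhibit a cycle of size $\operatorname{lcm}(k,l)$ in the tensor product $G(f, \ZZ_m) \otimes G(f, \ZZ_n)$, and then transport it across the isomorphism.

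First I would fix notation for the two given cycles: write the size-$k$ cycle in $G(f, \ZZ_m)$ as $\{a_0, a_1, \dots, a_{k-1}\}$ with edges $(a_i, a_{i+1})$, indices taken modulo $k$, and the size-$l$ cycle in $G(f, \ZZ_n)$ as $\{b_0, b_1, \dots, b_{l-1}\}$ with edges $(b_j, b_{j+1})$, indices modulo $l$. By the definition of the tensor product recalled in the proof of Theorem~\ref{thm:graph-isom}, the pair $((u,v),(u',v'))$ is an edge of $G(f,\ZZ_m)\otimes G(f,\ZZ_n)$ precisely when $(u,u')$ and $(v,v')$ are edges in the two respective factors.

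Next I would trace the orbit of the vertex $(a_0, b_0)$. Because each vertex of a functional graph has a unique outgoing edge, following edges in the product is forced to advance both coordinates simultaneously, so after $t$ steps the orbit reaches $(a_{t \bmod k}, b_{t \bmod l})$. This vertex equals the starting point $(a_0,b_0)$ exactly when $t \equiv 0 \pmod k$ and $t \equiv 0 \pmod l$ hold at once, and the least such positive $t$ is $\operatorname{lcm}(k,l)$. Since the orbit is purely periodic under the deterministic product map, its first return is also its first repetition, so the vertices $(a_{t \bmod k}, b_{t \bmod l})$ for $t = 0, 1, \dots, \operatorname{lcm}(k,l)-1$ are distinct and constitute a cycle of size $\operatorname{lcm}(k,l)$ in the tensor product.

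Transporting this cycle through the isomorphism of Theorem~\ref{thm:graph-isom} then produces a cycle of size $\operatorname{lcm}(k,l)$ in $G(f, \ZZ_{mn})$, which is the desired conclusion. I expect no serious obstacle: the one point meriting care is the identification of the least common return time of the two coordinates with $\operatorname{lcm}(k,l)$, but this is an immediate consequence of the definition of the least common multiple once one observes that the outdegree-one property of functional graphs forces the two coordinates to advance in lockstep rather than independently.
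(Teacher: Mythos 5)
Your proposal is correct and follows essentially the same route as the paper: both arguments form the product cycle through $(a_i,b_j)\mapsto(a_{i+1},b_{j+1})$ in $G(f,\ZZ_m)\otimes G(f,\ZZ_n)$, identify its length with the least common return time $\operatorname{lcm}(k,l)$, and transport it through the isomorphism of Theorem~\ref{thm:graph-isom}. Your extra remark that the outdegree-one property forces the coordinates to advance in lockstep (so first return equals first repetition) is a slightly more explicit justification of a step the paper leaves implicit, but the argument is the same.
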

\begin{proof}
Let $C_m=\{v_0,\dots, v_{k-1}\}$ and $C_n=\{u_0,\dots, u_{l-1}\}$ be cycles in $G(f,\ZZ_m)$ and $G(f,\ZZ_n)$, respectively.
Then for any $i,j$ there are directed edges
\[
((v_i, u_j), (v_{i+1}, u_{j+1}))
\]
in $G(f,\ZZ_m) \otimes G(f, \ZZ_n)$, making a cycle, where we take $i,j$ modulo $k,l$, respectively.
Note that the least positive integer $d$ such that  
$i=i+d \pmod k$ and $j=j+d \pmod l$ is the size of the cycle, 
which is by definition $d=\operatorname{lcm}(k,l)$.
Then by the isomorphism of the theorem there exists a cycle of size $d$ in $G(f, \ZZ_{mn}) $.
\end{proof}

\section{Main Results}

Next we would determine graphs $G(f, \ZZ_{p^n})$ from the information on $G(f, \ZZ_p)$ 
for $p$ prime.
But we soon find it fails as expected, that is, 
$G(f, \ZZ_p) \simeq G(g, \ZZ_p)$ does not mean $G(f, \ZZ_{p^n}) \simeq G(g, \ZZ_{p^n})$.
Therefore we suggest a problem that 
how to obtain the structural behavior of functional graphs $G(f, \ZZ_{p^n})$ for $n$
from the information on $G(f, \ZZ_p)$ and some properties of $f$.
In particular, we focus on the cycles in the graphs.

Throughout this article we use the terms \textit{lifted graph} and \textit{lifted cycle} as follows.
Let $\pi: \ZZ_{p^{n}} \to \ZZ_{p^m}$ for any $n>m$, be the natural projection.
Then we can naturally define, for a fixed $f$, 
\begin{align*}
\pi_G: G(f, \ZZ_{p^n}) \to G(f, \ZZ_{p^m})
\end{align*}
by $v \mapsto \pi(v)$ for vertices and $(u, v) \mapsto (\pi(u), \pi(v))$ for edges.
Let $S=(V,E)$ be a subgraph of $G(f, \ZZ_{p^m})$, and then we call 
the subgraph of $G(f, \ZZ_{p^n})$ induced by $\pi^{-1}(V)$, the lifted graph of $S$.
A lifted cycle of $S$ means a cycle in the lifted graph.

\begin{exam}
Let $p=3$, $f=x^3+2$. Then $C =\{\bar{0}, \bar{1}, \bar{2}\}$ is a cycle in $G(f, \ZZ_3)$.
The lifted cycle in $G(f, \ZZ_{9})$ of $C$ is the cycle $\{\bar{1}, \bar{3}, \bar{2}\}$, and 
the lifted graph in $G(f, \ZZ_{9})$ of $C$ is the whole graph $G(f, \ZZ_{9})$.
\end{exam}

Before the main result we introduce a key concept \textit{multiplier}. 
As a reference we have \cite{silverman_arithmetic_2007}.
Usually this concept is used to study functions and the periodic points in the context of dynamical systems over infinite fields.
So our result is an answer to how the concept is involved in the context of finite rings.

\begin{dfn}
Let $C =\{v_0, v_1, \dots, v_{k-1}\}$ be a cycle of size $k$ in a functional graph $G(f, \ZZ_{p^n})$ for a polynomial function $f$.
Then the \textbf{multiplier} of $C$ is defined as 
\[
\lambda(C) := (f^{k})'(v_0) = \prod_{v \in C} f'(v),
\]
where $f^k$ is the $k$-th iteration of $f$ and $(f^k)', f'$ denote the derivatives of the functions.
The second equality is given by the chain rule and this allows us to  
choose any vertex of $C$ instead of $v_0$ in the second term.
In most cases we use only the image of $\lambda(C)$ in $\ZZ_p$, denoted by $\bar\lambda(C)$.
\end{dfn}

\begin{thm}\label{thm:main-0}
Let $C$ be a cycle of size $k$ in $G(f, \ZZ_{p^n})$ for a polynomial function $f$ with multiplier $\lambda=\lambda(C)$.
Put $\bar{\lambda}$ as the image of $\lambda$ in $\ZZ_p$.
In what follows we always mean the lifted graph of $C$ as in $G(f, \ZZ_{p^{n+1}})$.
\begin{enumerate}[label={\upshape(\arabic*)}]
\item
If $\bar{\lambda} = 0$, then only one cycle  
is in the lifted graph of $C$, which has size $k$. 
\item
If $\bar{\lambda} = 1$, then 
the lifted graph of $C$ consists of only one cycle of size $kp$, or $p$ cycles of size $k$.
The latter case occurs if and only if $r_v=0$ for a vertex $v$ of $C$, where $r_v$ is defined in Definition \ref{def:r} below.
\item
If $\bar{\lambda} \neq 0, 1$, 
then 
the lifted graph of $C$ consists of 
a cycle of size $k$ and $(p-1)/m$ cycles of size $mk$, 
where $m=m(\bar{\lambda})$ is the multiplicative order of $\bar{\lambda}$, 
i.e. the least positive integer s.t. $\bar{\lambda}^m = 1$.
\end{enumerate}
\end{thm}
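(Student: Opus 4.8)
The plan is to reduce the entire statement to the dynamics of a single affine map on $\ZZ_p$. Write $\pi : \ZZ_{p^{n+1}} \to \ZZ_{p^n}$ for the projection, so that the lifted graph of $C$ has vertex set $\pi^{-1}(\{v_0,\dots,v_{k-1}\})$, which splits into $k$ fibers $F_i := \pi^{-1}(v_i)$, each of size $p$. Because $f(v_i)=v_{i+1}$ already holds in $\ZZ_{p^n}$, the map $f$ sends $F_i$ into $F_{i+1}$; hence $f$ cyclically permutes the fibers and $g:=f^k$ maps each $F_i$ to itself. I would first observe that a vertex of the lifted graph lies on a cycle if and only if it is a periodic point of $g$ on its fiber, and that a $g$-orbit of length $d$ inside $F_0$ is exactly the intersection with $F_0$ of a single lifted cycle of size $dk$ (the cycle wraps $d$ times around $C$). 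Thus it suffices to determine the cycles of $g|_{F_0}$ together with their lengths.

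Next I would compute $g|_{F_0}$ explicitly. Fix a lift $\hat v_0\in\ZZ_{p^{n+1}}$ of $v_0$ and parametrize $F_0$ by $t\in\ZZ_p$ via $\hat v_0 + t p^n$. Since $g$ is a polynomial with integer coefficients, Taylor expansion gives
\[
g(\hat v_0 + t p^n) = g(\hat v_0) + g'(\hat v_0)\, t p^n + (\text{integer})\, t^2 p^{2n} + \cdots .
\]
As $n\ge 1$ we have $2n\ge n+1$, so every term from the quadratic one onward vanishes modulo $p^{n+1}$. Writing $g(\hat v_0) = \hat v_0 + c\,p^n$ and reducing the slope $g'(\hat v_0)\equiv\bar\lambda \pmod p$, the induced map on the parameter is the affine map
\[
A : \ZZ_p \to \ZZ_p,\qquad A(t) = \bar\lambda\, t + \bar c,
\]
where $\bar c$ is the class of $c$, which we identify with the quantity $r_{v_0}$ of Definition~\ref{def:r}. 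The problem is now to describe the cycle structure of an affine self-map of $\ZZ_p$ with slope $\bar\lambda$.

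Finally I would split into the three cases according to $\bar\lambda$. If $\bar\lambda=0$, then $A$ is constant with value $\bar c$, which is its only periodic point (a fixed point), yielding one lifted cycle of size $k$. If $\bar\lambda=1$, then $A$ is the translation $t\mapsto t+\bar c$; for $\bar c\ne 0$ this is a single $p$-cycle (as $p$ is prime), giving one lifted cycle of size $kp$, while for $\bar c=r_{v_0}=0$ it is the identity with $p$ fixed points, giving $p$ lifted cycles of size $k$. If $\bar\lambda\ne 0,1$, then $1-\bar\lambda\in\ZZ_p^\times$, so $A$ has a unique fixed point $t^\ast=\bar c/(1-\bar\lambda)$; conjugating by the translation $s=t-t^\ast$ turns $A$ into $s\mapsto\bar\lambda s$, whose orbits are $\{0\}$ together with the orbits of $\langle\bar\lambda\rangle$ acting on $\ZZ_p^\times$ by multiplication. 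Since $\langle\bar\lambda\rangle$ has order $m$, the $p-1$ nonzero classes split into $(p-1)/m$ orbits of length $m$, giving one lifted cycle of size $k$ and $(p-1)/m$ lifted cycles of size $mk$.

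The main obstacle is the affine reduction of the second paragraph: one must verify that the higher-order Taylor terms genuinely vanish modulo $p^{n+1}$ (using integrality of the divided coefficients and $n\ge 1$) and that the constant term $\bar c$ is well defined and agrees with $r_{v_0}$ independently of the chosen lift $\hat v_0$. Once $g|_{F_0}$ is pinned down as an affine map, the three cases are routine facts about affine dynamics on $\ZZ_p$. A secondary point to check is the base-point independence in case (2), namely that $r_v=0$ for one vertex of $C$ forces it for all: the fiber maps $g|_{F_i}$ over consecutive vertices are conjugate via the affine maps $f\colon F_i\to F_{i+1}$, whose slopes are the units $f'(v_i)\bmod p$, so $r_{v_{i+1}} = (f'(v_i)\bmod p)\, r_{v_i}$ and the vanishing propagates around the cycle.
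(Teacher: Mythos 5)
Your proposal is correct and takes essentially the same route as the paper: the Taylor expansion modulo $p^{n+1}$ that reduces $g=f^k$ on a fiber to the affine map $t\mapsto\bar\lambda t+r_v$ on $\ZZ_p$ is exactly the paper's computation culminating in \eqref{eq:iter_n}, only packaged as affine dynamics (with the conjugation to $s\mapsto\bar\lambda s$ replacing the paper's direct solve for the return time in case (3), and your $r_{v_{i+1}}=\overline{f'(v_i)}\,r_{v_i}$ propagation appearing as Lemma \ref{lem:property-r}(2)). The fiber-to-cycle bookkeeping you describe is handled in the paper by Lemma \ref{lem:size-multiple} together with a count of vertices.
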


\begin{rem}
For (2), we can choose any vertex $v$ of $C$ for $r_v$ (see Lemma \ref{lem:property-r} (2)).
\end{rem}

From now on we use the following notation for integer ranges:
\[
[a, b) := \{a, a+1, \dots, b-1\}.
\]

\begin{dfn}\label{def:r}
For a vertex $v$ of a cycle $C$ of size $k$, in a functional graph $G(f, \ZZ_{p^n})$, 
define $r_v \in [0, p)$ by 
\begin{align*}
r_v := \frac{f^k(a_v) - a_v}{p^n}\, \%\, p,
\end{align*}
where $a_v$ is the unique representative in $[0,p^n)$ of $v$, and 
$A \mathbin{\%} B \in [0, B)$ denotes the remainder for division of $A$ by $B>0$.
\end{dfn}

In other words, $r_v \in [0, p)$ is the integer such that
\[
f^k(a_v) = a_v + r_v p^n \pmod{p^{n+1}}.
\]

\begin{rem}
As we will see in Lemma \ref{lem:property-r} (1), 
if $\bar{\lambda}(C)=1$, then in the definition of $r_v$, 
we can use any representative of $v$ instead of $a_v$.
But the quantity $r_v$ is used in the proof of Theorem \ref{thm:main-0}, where the assumption does not necessarily hold, 
and so we define it using $a_v$.

Also note that $n$ is dependent on $v$.
\end{rem}

The next lemma is an easy fact used in the proof of the theorem.
\begin{lem} \label{lem:size-multiple}
If $C$ is a cycle of size $k$ in $G(f,\ZZ_{p^n})$,
then the size of any lifted cycle of $C$ 
is a multiple of $k$.
\end{lem}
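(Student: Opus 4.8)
The plan is to exploit the fact that the projection $\pi_G\colon G(f,\ZZ_{p^{n+1}}) \to G(f,\ZZ_{p^n})$ is a graph homomorphism that is compatible with $f$ on both sides. Intuitively, projecting any lifted cycle down to $G(f,\ZZ_{p^n})$ should trace out the original cycle $C$ some whole number of times, and that whole number being an integer is exactly the divisibility we want. So the strategy is to take an arbitrary lifted cycle, push it forward under $\pi$, and track how its vertices move along $C$.

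Concretely, I would fix a lifted cycle $D = \{w_0, w_1, \dots, w_{d-1}\}$ of size $d$ in the lifted graph of $C$, so that $f(w_j) = w_{j+1}$ with the index read modulo $d$. By the definition of the lifted graph, every vertex $w_j$ lies in $\pi^{-1}(V)$, where $V = \{v_0, v_1, \dots, v_{k-1}\}$ is the vertex set of $C$ ordered so that $f(v_i) = v_{i+1}$ modulo $k$; hence each $\pi(w_j)$ equals some $v_i$. The crucial point is that $\pi$ commutes with $f$: because $f$ has integer coefficients, reducing modulo $p^n$ after applying $f$ agrees with applying $f$ to the reduction, so $\pi(f(w)) = f(\pi(w))$ for every $w \in \ZZ_{p^{n+1}}$.

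With this in hand I would argue as follows. Writing $\pi(w_0) = v_i$, an immediate induction using $\pi(w_{j+1}) = \pi(f(w_j)) = f(\pi(w_j))$ and the cyclic structure of $C$ gives $\pi(w_j) = f^j(v_i) = v_{i+j}$, with the subscript taken modulo $k$. Applying this at $j = d$ and using $w_d = w_0$ yields $v_{i+d} = v_i$, which forces $d \equiv 0 \pmod{k}$, that is, $k \mid d$. I do not expect a genuine obstacle: the lemma is elementary, and the only step that merits a moment of care is verifying the equivariance $\pi \circ f = f \circ \pi$, together with the observation that this equivariance propels the projected vertices through $v_i, v_{i+1}, v_{i+2}, \dots$ in their correct cyclic order rather than merely landing them somewhere in $V$.
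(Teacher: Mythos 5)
Your proposal is correct and follows essentially the same route as the paper: both project the lifted cycle down via the equivariance $\pi \circ f = f \circ \pi$ and conclude from the fact that a vertex of $C$ returns to itself only after a multiple of $k$ iterations. The paper phrases this by writing the lifted cycle's length as $Ak+B$ and forcing $B=0$, while you track the indices $v_{i+j}$ explicitly, but the content is identical.
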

\begin{proof}
Assume the size of a lifted cycle of $C$ is $l$ and write $l=Ak+B$ 
for some $A\geq 0$ with $0\leq B<k$.
Then for a vertex $v'$ in the lifted cycle, $v' = f^l(v')$, in particular $v=f^l(v)$, 
where $v\in \ZZ_{p^n}$ is the image of $v'$.
Since $f^l(v) = f^{Ak+B}(v) = f^B(f^{Ak}(v)) = f^B(v)$, it requires $B=0$.
\end{proof}

\begin{proof}[Proof of Theorem \ref{thm:main-0}]
Let $v\in \ZZ_{p^n}$ be a vertex of $C$, 
and $a \in\ZZ$ any representative of $v$.
Then
\begin{align*}
a = a_v + b p^n \pmod{p^{n+1}}
\end{align*}
with some $a_v \in [0, p^n)$ and $b \in [0,p)$.
Put $g:=f^k$ and then the Taylor expansion is
\begin{align*}
g(x_0 + h) = g(x_0) + g'(x_0)h + \frac{g''(x_0)}{2!}h^2+\cdots,
\end{align*}
where note the coefficients $\frac{g^{(n)}(x_0)}{n!}$ are integers and the series is actually finite since $g$ is a polynomial over $\ZZ$.
So we have for $n \geq 1$
\begin{align}
g(a) 
&= g(a_v + b p^n) \pmod{p^{n+1}} \nonumber\\
&= g(a_v) + g'(a_v) (b p^n) +  \frac{g''(a_v)}{2!} (b p^n)^2 +\cdots \nonumber\\
&=a_v+(r_v+\gamma b)p^n,
\end{align}
where 
$r_v$ is defined in Definition \ref{def:r} and $\gamma := g'(a_v)$.
Then the iteration gives
\begin{align}
g^2(a)
&=a_v+(r_v+\gamma(r_v+\gamma b))p^n \pmod {p^{n+1}} \nonumber\\
&=a_v+(r_v+r_v\gamma+\gamma^2 b)p^n, \nonumber\\
&{\hspace{80pt} \vdots} \nonumber\\
g^N(a) 
&= a_v+ (r_v\sum_{l=0}^{N-1} \gamma^l +\gamma^N b)p^n \pmod {p^{n+1}}. \label{eq:iter_n}
\end{align}

If $\bar{\lambda}=0$, i.e. $\gamma=g'(a_v)=0 \pmod p$, then $g(a)=a_v + r_v p^n \pmod {p^{n+1}}$, which is independent of $b$.
This means  all the vertices in the form $a_v+cp^n$  in the lifted graph 
are mapped to a single vertex by $f^k$. 
In particular, $f^k(a_v+r_v p^n)=a_v+r_v p^n \pmod {p^{n+1}}$.
Now write $C=\{v_0, v_1, \dots, v_{k-1}\}$ 
and then
we claim the only lifted cycle of $C$ is
\[
C' = \{a_0+r_0 p^n, a_1+r_1p^n,..., a_{k-1}+r_{k-1}p^n\} \pmod{p^{n+1}},
\]
where $a_j\in [0, p^n)$ is the representatives of $v_j$ and $r_j := r_{v_j}$.
Indeed, as seen above 
we have a cycle
\begin{align*}
\{a_0+r_0p^n, f(a_0+r_0p^n),..., f^{k-1}(a_0+r_0 p^n)\} \pmod{p^{n+1}}
\end{align*}
in $G(f, \ZZ_{p^{n+1}})$ and this is actually identical to $C'$ 
since for any $j$, 
\begin{align*}
f^j(a_0+r_0 p^n) 
&= f^{j+k}(a_0+r_0 p^n) \pmod{p^{n+1}}\\
&= f^k(f^j(a_0+r_0 p^n))\\
&= f^k(a_{j}+r' p^n)\\
&= a_{j} + r_{j} p^n,
\end{align*}
where $r'$ is some unknown integer but make 
no differences in the result.
Now we have to show there is no other cycle than $C'$.
Let $v$ be a vertex in the lifted graph of $C$ but not in $C'$. 
For a representative of $v$ we can take $a_j + s p^n$ with $s \neq r_j$ for some $j$. 
For $i <k$ clearly $f^i(a_j + s p^n)\neq a_j + s p^n$ modulo ${p^{n+1}}$ since  it holds modulo $p^n$.
Then as we have seen above, independent of $s$,
\[
g(a_j + s p^n) = f^k(a_j + s p^n) = a_j+r_j p^n,
\]
which is in $C'$ and this never goes outside $C'$ by iteration of $f$, 
which implies $C'$ is the only lifted cycle of $C$.

If $\bar{\lambda}=1$, then by \eqref{eq:iter_n} 
\[
g^N(a)=a_v+(Nr_v+b)p^n \pmod {p^{n+1}}.
\]
This means the smallest $N>0$ s.t. $g^N(a)=a \pmod {p^{n+1}}$ is 
\begin{align*}
N = 
\begin{cases}
1 & (r_v=0),\\
p & (r_v\neq 0),
\end{cases}
\end{align*}
thus by Lemma \ref{lem:size-multiple} 
the size of the cycles in the lifted graph is 
\begin{align*}
\begin{cases}
k & (r_v=0),\\
kp & (r_v\neq 0).
\end{cases}
\end{align*}

Finally, assume $\bar{\lambda}\neq 0, 1$.
Now if $N$ equals the multiplicative order of $\bar{\lambda}$, denoted by $m(\bar{\lambda})$, 
then since 
\[
\sum_{l=0}^{N-1} \gamma^l = \frac{\gamma^N-1}{\gamma-1}=0 \pmod p, 
\]
we have $g^N(a)=a_v+b p^n = a\pmod{p^{n+1}}$ by \eqref{eq:iter_n},
which means the period of $a \pmod{p^{n+1}}$ for $g$ is at most $m(\bar{\lambda})$.
If it is strictly smaller, that is, $N < m(\bar{\lambda})$ and $g^N(a)=a_v+b p^n \pmod{p^{n+1}}$,
then 
\begin{align*}
&r_v \frac{\gamma^N-1}{\gamma-1}+\gamma^N b = b \pmod{p}\\
&\Longleftrightarrow r_v = b\, (1-\gamma) \pmod p, 
\end{align*}
which gives
\[
g(a)=a_v + b p^n \pmod{p^{n+1}}.
\]
So we have proved that 
the smallest $N>0$ s.t. $g^N(a)=a \pmod {p^{n+1}}$ is
\begin{align*}
N = 
\begin{cases}
1 & (b=r_v(1-\gamma)^{-1} \pmod p),\\
m(\bar{\lambda}) & (b: \text{otherwise}).
\end{cases}
\end{align*}
So the iteration of $f$ with $a_v + l p^n \pmod {p^{n+1}}$ generates 
a cycle of size $m(\bar{\lambda})\cdot k$ for each $l \pmod p$ except one value, in which case the size is $k$. 
Note $km$ distinct vertices are necessary to make a cycle of size $km$, 
thus the number of cycles of size $km$ is $(kp-Tk)/km=(p-1)/m$ since the number of vertices in the lifted graph of $C$ is $kp$.
\end{proof}

\begin{rem}
In most cases after several lifting of a cycle with multiplier $1$, 
eventually we attain $r_v \neq 0$, and so by Theorem \ref{thm:main-0} (2) 
the size of cycles is expanding by a factor of $p$. 
But there is an example such that $r_v$ is staying in $0$ while successive liftings.
Let
\[
f(x) =3x-x^3,
\]
and $C_i:=\{\bar{-2}, \bar2\}$ in $G(f, \ZZ_{p^i})$ for $i=1,2,3,\cdots$ is a sequence of lifted cycles,
which is derived from ``global'' cycle $\{-2, 2\}$ in $G(f, \ZZ)$.
Now, for example, if $p=5$, then we have  
\[
\bar\lambda(C_i)= f'(-2)f'(2) = 81 = 1 \pmod 5.
\]
Then recall $r_v$ is independent of choice of representative of $v$ as mentioned above. 
So for $v=\bar{2}$ in $C_i$ for every $i$, 
\begin{align*}
r_{v} 
= \frac{f^2(2)-(2)}{p^i} \% p
=0.
\end{align*}
\end{rem}

As a complement we give a lemma about the behavior of the multiplier, which is used in Corollary \ref{cor:main}.
\begin{lem} \label{lem:property-multiplier}
Let $C$ be a cycle in a functional graph $G(f, \ZZ_{p^n})$ and $C'$ a lifted cycle in $G(f, \ZZ_{p^{n+1}})$ of $C$.
Let the size of $C$ be $k$.
If the size of $C'$ equals $k$, then $\bar\lambda(C')=\bar\lambda(C)$.
Otherwise, $\bar\lambda(C')=1$.
\end{lem}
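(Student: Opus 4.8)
The plan is to reduce everything to a single identity, $\bar\lambda(C') = \bar\lambda(C)^t$, where $t := |C'|/k$ is a positive integer by Lemma \ref{lem:size-multiple}. The starting observation is that $f'$ is a polynomial with integer coefficients, so its reduction mod $p$ factors through $\pi$. Concretely, if $v'$ is a vertex of $C'$ with $\pi(v') = v$, then the representatives satisfy $v' \equiv v \pmod{p^n}$, hence $f'(v') \equiv f'(v) \pmod p$ (using $n \geq 1$). Thus the image of $f'(v')$ in $\ZZ_p$ depends only on the projected vertex $v = \pi(v')$.

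Next I would count multiplicities. Writing $C' = \{v'_0, v'_1, \dots, v'_{tk-1}\}$, the fact that $\pi$ commutes with $f$ (both being reductions of the same integer polynomial) means the sequence $w_i := \pi(v'_i)$ is an orbit of $f$ in $G(f, \ZZ_{p^n})$ lying in $C$. Since $C$ is a cycle of size $k$, this orbit is periodic with period $k$, so $w_i$ depends only on $i \bmod k$ and each vertex of $C$ occurs exactly $t$ times among $w_0, \dots, w_{tk-1}$. Combining this with the previous paragraph gives, in $\ZZ_p$,
\[
\bar\lambda(C') = \prod_{v' \in C'} \overline{f'(v')} = \prod_{v \in C} \overline{f'(v)}^{\,t} = \bar\lambda(C)^t.
\]

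The two cases are then immediate. If $|C'| = k$ then $t = 1$ and $\bar\lambda(C') = \bar\lambda(C)$. If $|C'| \neq k$ then $t > 1$, and I would invoke Theorem \ref{thm:main-0} to pin down $t$: the case $\bar\lambda(C) = 0$ yields only size $k$ and so cannot occur here; if $\bar\lambda(C) = 1$ then $\bar\lambda(C)^t = 1$ trivially; and if $\bar\lambda(C) \neq 0, 1$ the only size exceeding $k$ is $mk$ with $m = m(\bar\lambda(C))$, forcing $t = m$ and hence $\bar\lambda(C') = \bar\lambda(C)^m = 1$ by the definition of the multiplicative order.

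The only step requiring care is the multiplicity count, namely that the projection of $C'$ traverses $C$ exactly $t$ times. This rests on $\pi_G$ preserving edges, so that the projected traversal is genuinely periodic with period $k$; I expect this to be the main point to state cleanly, after which the identity $\bar\lambda(C') = \bar\lambda(C)^t$ and the case analysis are routine.
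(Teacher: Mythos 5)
Your proof is correct and follows essentially the same route as the paper: both establish the identity $\bar\lambda(C')=\bar\lambda(C)^t$ with $t=|C'|/k$ via the multiplicity count of the projection $\pi$ on the traversal of $C'$, and then dispose of the two cases using Theorem \ref{thm:main-0} to constrain $t$. Your write-up is if anything slightly cleaner, since you phrase the product directly in terms of $f'$ rather than $g'=(f^k)'$.
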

\begin{proof}
We have the following commutative diagram:
\begin{tikzcd}
\ZZ_{p^{n+1}} \arrow{r}{\pi_{n+1,n}}  \arrow{rd}{\pi_{n+1}} & \ZZ_{p^n} \arrow{d}{\pi_n} \\
& \ZZ_p.
\end{tikzcd}

Write $C= \{v_0, v_1, \dots, v_{k-1}\}$ and $g:=f^k$.
If the size of $C'$ equals $k$, then we can write 
$C'=\{v_0', v_1', \dots, v_{k-1}'\}$ with some $v_i'$ such that $\pi_{n+1, n}(v_i') = v_i$.
Since $g'$ is also a polynomial function, we have
\[
\bar\lambda(C')=\pi_{n+1}(\prod g'(v_i')) = \pi_n(\prod g'(\pi_{n+1, n}(v_i'))) = \pi_n(\prod g'(v_i)) = \bar\lambda(C).
\]

Otherwise, the size of $C'$ equals $kp$ or $km$ by Theorem \ref{thm:main-0}, 
where $m$ is the multiplicative order of $\bar\lambda(C)$.
Now note in general if the size of $C'$ equals $kM$ for some integer $M$, 
then $C'$ must contain $M$ distinct vertices which are mapped to $v_i$ by $\pi_{n+1, n}$ for each $i$. 
So by similar computation as above we have 
\[
\bar\lambda(C')=\bar\lambda(C)^M.
\]
In the first case $\bar\lambda(C)=1$ by Theorem \ref{thm:main-0} and so $\bar\lambda(C')=1$.
In the second case $\bar\lambda(C')=\bar\lambda(C)^m=1$. 
\end{proof}

The following lemma states about properties of the quantity $r_v$, which is also used in Corollary \ref{cor:main}.

\begin{lem}\label{lem:property-r}
We will use the same notation as Definition \ref{def:r}.
Let $v$ be a vertex of $C$ in $G(f, \ZZ_{p^n})$, and assume $\bar\lambda(C)=1$.
\begin{enumerate}
\item 
We can use any representative of $v$ instead of $a_v$ 
for $r_v$ in Definition \ref{def:r}.
In particular, $f^k(a)=a+r_v p^n \pmod{p^{n+1}}$ for any representative $a$ of $v$, 
where $k$ is the size of $C$.

\item 
Either $r_v=0$ for all $v$ of $C$, or $r_v\neq 0$ for all $v$ of $C$.

\item
Assume $p>3$, or $p=3$ and $n>1$.
Let 
$v'$ be a vertex of a lifted cycle $C'$ in $G(f, \ZZ_{p^{n+1}})$ of $C$ such that $\pi_{n+1,n}(v')=v$,
where $\pi_{n+1,n}$ is the projection $\ZZ_{p^{n+1}}\to \ZZ_{p^n}$.
If $r_v \neq 0$, 
then $r_{v'} \neq 0$.

\end{enumerate}
\end{lem}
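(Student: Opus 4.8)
The plan is to reduce everything to an explicit iteration of $g:=f^k$ modulo $p^{n+2}$ and then read off $r_{v'}$ from the coefficient of $p^{n+1}$. First I would record the structural consequences of the hypotheses: since $\bar\lambda(C)=1$ and $r_v\neq0$, Lemma \ref{lem:property-r} (2) forces $r_w\neq0$ for every vertex $w$ of $C$, and Theorem \ref{thm:main-0} (2) then shows the lifted graph is a single cycle $C'$ of size $kp$, whose multiplier is again $1$ by Lemma \ref{lem:property-multiplier}. Hence the relevant cycle size is $kp$, and the representative $a_{v'}\in[0,p^{n+1})$ of $v'$ can be written $a_{v'}=a_v+bp^n$ with $b\in[0,p)$, where $a_v\in[0,p^n)$ represents $v$. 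Writing the integer identity $g(a_v)=a_v+cp^n$, I note $c\equiv r_v\pmod p$, so $c\not\equiv0\pmod p$; recovering $c$ modulo $p$ is exactly the target.

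Next I would iterate. Because $v$ is a fixed point of $g$ in $\ZZ_{p^n}$, every iterate has the shape $g^j(a_{v'})=a_v+u_jp^n\pmod{p^{n+2}}$ with $u_j\in\ZZ/p^2$, and since $u_p\equiv b\pmod p$ already (as $C'$ has size $kp$) with $r_{v'}=\big((u_p-b)/p\big)\bmod p$, the goal becomes showing $u_p\not\equiv b\pmod{p^2}$. Applying the Taylor expansion of $g$ about $a_v$, as in the proof of Theorem \ref{thm:main-0}, the terms of order $p^{in}$ with $i\geq3$ drop out modulo $p^{n+2}$, while the second-derivative term of order $p^{2n}$ survives only when $n=1$. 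This produces the recursion $u_{j+1}\equiv c+\gamma u_j\pmod{p^2}$ for $n\geq2$, and $u_{j+1}\equiv c+\gamma u_j+\mu p\,u_j^2\pmod{p^2}$ for $n=1$, where $\gamma:=g'(a_v)\equiv1\pmod p$ and $\mu:=g''(a_v)/2\in\ZZ$.

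For $n\geq2$ the recursion is linear, so I would solve it in closed form $u_p\equiv\gamma^p b+c\sum_{i=0}^{p-1}\gamma^i\pmod{p^2}$. Since $p$ is odd and $\gamma\equiv1\pmod p$, one gets $\gamma^p\equiv1\pmod{p^2}$ and $\sum_{i=0}^{p-1}\gamma^i\equiv p\pmod{p^2}$, hence $u_p-b\equiv cp\pmod{p^2}$ and therefore $r_{v'}\equiv c\equiv r_v\neq0\pmod p$. For $n=1$ I would treat the quadratic term as a perturbation, unrolling the recursion as $u_p=\gamma^p b+c\sum_{i=0}^{p-1}\gamma^i+\mu p\sum_{j=0}^{p-1}\gamma^{p-1-j}u_j^2$ and reducing the last sum modulo $p$ (where $\gamma\equiv1$ and $u_j\equiv b+jc$), which collapses the correction to $\mu p\sum_{j=0}^{p-1}(b+jc)^2\pmod{p^2}$.

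The main obstacle, and the source of the hypothesis, is precisely this correction. Expanding $\sum_{j=0}^{p-1}(b+jc)^2$ through the power sums $\sum_{j} j$ and $\sum_{j} j^2$, both vanish modulo $p$ exactly when $p>3$ (indeed $\sum_{j=0}^{p-1}j^2=\tfrac{(p-1)p(2p-1)}{6}$ is divisible by $p$ precisely for $p>3$), so the correction is $\equiv0\pmod{p^2}$ and the $n=1$ case reduces to the same conclusion $u_p-b\equiv cp\pmod{p^2}$, giving $r_{v'}\equiv r_v\neq0$. This is exactly why $p=3,\,n=1$ must be excluded: there $\sum_{j}j^2\not\equiv0\pmod3$, and the correction can cancel the main term. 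I expect the bookkeeping of the $n=1$ perturbation to be the only delicate point, the remainder being routine power-sum arithmetic.
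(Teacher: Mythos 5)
Your argument for part (3) is correct and is essentially the paper's own proof: both reduce to iterating $g=f^k$ via its Taylor expansion modulo $p^{n+2}$, observe that only the first-derivative term survives for $n\geq 2$ while the second-derivative term must be tracked for $n=1$, and land on the power sums $\sum_{j=0}^{p-1}j=\tfrac{(p-1)p}{2}$ and $\sum_{j=0}^{p-1}j^2=\tfrac{(p-1)p(2p-1)}{6}$, whose divisibility by $p$ is exactly what the hypothesis ``$p>3$, or $p=3$ and $n>1$'' guarantees. Your recursion $u_{j+1}\equiv c+\gamma u_j+\mu p\,u_j^2$ is just a cleaner bookkeeping of the same computation, and your identification of $r_{v'}$ with $(u_p-b)/p \bmod p$ is sound since $a_{v'}=a_v+bp^n$ is the canonical representative and $C'$ has size $kp$.

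The gap is one of coverage, not of correctness: the statement you were asked to prove has three parts, and you prove only (3), while explicitly invoking (1) and (2) as inputs (to conclude $r_w\neq 0$ on all of $C$, hence that the lifted graph is a single cycle of size $kp$, and to justify the choice of representative). There is no circularity in using (1) and (2) inside (3) — the paper does the same — but you must actually supply them. Both are short Taylor-expansion facts: for (1), if $b=a+cp^n$ then $g(b)-b\equiv g(a)-a\pmod{p^{n+1}}$ because $g'(a)cp^n$ cancels against $cp^n$ when $g'(a)\equiv 1\pmod p$... more precisely $g(b)-b=g(a)-a+(g'(a)-1)cp^n\equiv g(a)-a\pmod{p^{n+1}}$; for (2), writing $w=f^l(v)$ one computes $g(f^l(a))-f^l(a)=f^l(g(a))-f^l(a)\equiv (f^l)'(a)\,r_vp^n\pmod{p^{n+1}}$ and notes $(f^l)'(a)$ is a unit mod $p$ since it divides $(f^k)'(a)\equiv 1$. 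Add these and the proof is complete.
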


\begin{proof}
Put $g = f^k$. 

(1) Let $a, b$ be distinct representatives of $v$, and then with some $c\in [0,p)$ 
we have 
\[
b = a+c p^n \pmod{p^{n+1}}.
\]
Using the Taylor series, 
\begin{align*}
g(b)-b
&=g(a+c p^n)-(a+c p^n) \pmod{p^{n+1}}\\
&=g(a)+g'(a) c p^n -a- c p^n\\
&=g(a) -a,
\end{align*}
which implies the assertion.

(2) For another vertex $w$ of $C$ there exists an integer $l < k$ s.t. $f^l(v)=w$ 
and 
let $a$
be a representative of $v$. 
Then by (1) we have
\begin{align*}
r_{w} = \frac{g(f^l(a))-f^l(a)}{p^n}\, \%\, p 
\end{align*}
since $f^l(a)$ is a representative of $w$. 
Now 
\begin{align*}
g(f^l(a))-f^l(a) 
&=f^l(g(a))-f^l(a) \pmod{p^{n+1}}\\
&=f^l(a+r_v p^n)-f^l(a)\\
&=f^l(a)+(f^l)'(a)\,r_v p^n-f^l(a)\\
&= (f^l)'(a)\, r_v p^n.
\end{align*}
Note $(f^l)'(a)$ is not divisible by $p$, since $(f^l)'(a)$ divides $(f^k)'(a)$ and $(f^k)'(a)=1 \pmod p$.
So we have 
\[
r_v=0 \Longleftrightarrow r_{w}=0.
\]

(3) Since $r_v \neq 0$, by Theorem \ref{thm:main-0} (2) the size of $C'$ is $kp$, 
and also $\bar{\lambda}(C')=1$ by Lemma \ref{lem:property-multiplier}.
So by (1) we have 
\[
r_{v'} = \frac{f^{kp}(a')-a'}{p^{n+1}} \% p,
\]
for any representative $a'$ of $v'$.
Now by (2), to see whether $r_{v'}=0$ or not, 
it suffices to consider replacing $a'$ with a representative of any vertex of $C'$. 
Note the size of $C'$ is $kp$, thus $\pi^{-1}_{n+1,n}(v)$ is contained in $C'$, 
which implies we can choose any representative $a$ of $v$ to replace $a'$.

First, we have 
\[
f^k(a) = g(a) = a + r_v p^n + \tau p^{n+1} \pmod{p^{n+2}} %
\]
with some $\tau  \in [0,p)$, and so 
\begin{align*}
g^2(a)
&=g(a + r_v p^n + \tau p^{n+1}) \pmod{p^{n+2}}\\
&=g(a) + g'(a)(r_v p^n + \tau p^{n+1})
+ \frac{g''(a)}2 (r_v p^n + \tau p^{n+1} )^2 \\
&=a +2r_v p^n + (2\tau  +\mu) p^{n+1} + \frac{g''(a)}{2}r_v^2 p^{2n}, 
\end{align*}
where $\mu$ is such that $g'(a)=1+\mu p \pmod{p^2}$, 
and the iteration gives
\begin{align*}
g^p(a) 
&= a +p\cdot r_v p^n + \{p\, \tau  + \mu (1+2+\cdots +p-1)r_v\} p^{n+1} \\
&\qquad\qquad +\frac{g''(a)}{2}\{1+2^2+\cdots (p-1)^2\}r_v^2 p^{2n} \pmod{p^{n+2}}\\
&= a +r_v p^{n+1} +\mu \frac{(p-1)p}{2} r_v p^{n+1}+ \frac{g''(a)}{2}\frac{(p-1)p(2p-1)}{6} r_v^2 p^{2n}.
\end{align*}

If $p>3$, then integers $\frac{(p-1)p(2p-1)}{6}$ and $\frac{(p-1)p}{2}$ are divisible by $p$. 
If $p=3$ and $n>1$, then $p^{2n}=0 \pmod{p^{n+2}}$ and the integer $\frac{(p-1)p}{2}$ is divisible by $p$.
In any case the statement holds.
\end{proof}

\begin{rem}
On (3), the assumption $p>3$, or $p=3$ and $n>1$ is the best possible 
in the following sense.
If 
$p= 3, n=1$, then we have actually an example where
$r_v\neq 0$ but $r_{v'} =0$.
Indeed, let
\begin{align*}
p =3,\quad f(x) = x^2+1.
\end{align*}
Then $G(f, \ZZ_3) = \{(\bar0, \bar1), (\bar1,\bar2), (\bar2, \bar2)\}$ (represented by the directed edges), 
and there is a cycle $C=\{\bar2\}$ with 
$\bar\lambda(C)=1$. For $v=\bar2$, we have $r_{v}=\frac{f(2)-2}{3} \mathbin{\%} 3 =1$.
Now the cycle $C'=\{\bar2, \bar5, \bar8\}$ in $G(f, \ZZ_{3^2})$ is a lifted cycle of $C$. 
But, for example, for the vertex $v'=\bar2$ of $C'$, we have 
$r_{v'}=\frac{f^3(2)-2}{9} \mathbin{\%} 3 =0$.

Further if $p=2$, then $f(x)=x^3$ is an example such that 
for $n=3$ there are vertices $v, v'$ such that $r_v\neq 0$ but $r_{v'}=0$.
Indeed, let $v= \bar{3} \in \ZZ_{2^3}$, and then $C=\{v\}$ is a cycle in $G(f, \ZZ_{2^3})$ 
with $\lambda(C)=1$ and $r_v=1$. 
But concerning the lifted cycle $C'=\{\bar{3}, \bar{11}\}$ in $G(f, \ZZ_{2^4})$ of $C$, 
we have $r_{v'}=0$, where $v'=\bar{3}, \bar{11}$.
With a little more effort, we can see this phenomenon occurs for any $n\geq 3$.

\end{rem}

Using the main theorem with the lemma, 
we can give a description on the behavior of cycles on successive liftings.

\begin{cor}\label{cor:main}
Let $C$ be a cycle of size $k$ in $G(f, \ZZ_p)$. 
\begin{enumerate}[label={\upshape(\arabic*)}]
\item \label{it:cor-multi=0}
If $\bar{\lambda}(C)=0$, then for each $n>1$ there exists the only one lifted cycle in $G(f, \ZZ_{p^n})$ of $C$, which has size $k$.

\item \label{it:cor-multi!=0}
If $\bar{\lambda}(C)\neq 0$, then any vertex of the lifted graph of $C$ is part of a cycle.

\item \label{it:cor-multi=1}
Assume $p>3$ and $\bar{\lambda}(C)=1$. 
If the size of a lifted cycle $C'$ in $G(f, \ZZ_{p^N})$ of $C$ is $kp$ for some $N>1$, 
then any lifted cycle $C''$ in $G(f, \ZZ_{p^n})$ of $C'$ is of size $kp^{n-N+1}$ for $n\geq N$.

\item \label{it:cor-multi!=0,1}
If $\bar{\lambda}(C)\neq 0, 1$,
then for each $n>1$ there exists exactly one lifted cycle of size $k$ 
in $G(f, \ZZ_{p^n})$ of $C$.
Each of the other lifted cycles is of size $kmp^j$ for some $j\geq 0$,
where $m$ is the multiplicative order of $\bar{\lambda}(C)$.

\end{enumerate}
\end{cor}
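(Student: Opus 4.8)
The plan is to prove all four parts by induction on $n$, lifting one level at a time and, at each step, invoking the case of Theorem \ref{thm:main-0} dictated by the multiplier of the cycle being lifted. The engine that makes these inductions close is Lemma \ref{lem:property-multiplier}: along a tower of liftings the multiplier can only stay equal to $\bar\lambda(C)$ (while the size is unchanged) or collapse to $1$ (at the first size jump), and thereafter it stays $1$. A second observation used repeatedly is that cases (2) and (3) of Theorem \ref{thm:main-0} place all $kp$ vertices of a one-step lift on cycles (respectively $kp$ vertices, and $k+\frac{p-1}{m}\cdot mk=pk$ vertices), whereas case (1) places only $k$ of them on a cycle; this is exactly what separates parts (1) and (2).

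For part (1) I would note that any lifted cycle of $C$ in $G(f,\ZZ_{p^n})$ projects onto a lifted cycle of $C$ in $G(f,\ZZ_{p^{n-1}})$, which by the inductive hypothesis is unique and of size $k$; by Lemma \ref{lem:property-multiplier} that cycle still has multiplier $0$, so Theorem \ref{thm:main-0}(1) produces exactly one cycle above it, again of size $k$, and uniqueness propagates. For part (2) the point is that the multiplier never reaches $0$: starting from $\bar\lambda(C)\neq 0$, Lemma \ref{lem:property-multiplier} keeps every descendant multiplier equal either to $\bar\lambda(C)$ or to $1$, both nonzero. Hence every lifting step falls under Theorem \ref{thm:main-0}(2) or (3), which exhaust the whole fiber with cycles; an easy induction then shows the entire lifted graph of $C$ is a disjoint union of cycles, so no vertex ever lands on a tail.

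Part (4) splits into two claims. For the unique size-$k$ cycle I would again induct: the size-$k$ cycle at level $n-1$ retains multiplier $\bar\lambda(C)\neq 0,1$ by Lemma \ref{lem:property-multiplier}, so Theorem \ref{thm:main-0}(3) lifts it to exactly one size-$k$ cycle (plus larger ones), and conversely a size-$k$ cycle at level $n$ can only project to a size-$k$ cycle at level $n-1$, since by Lemma \ref{lem:size-multiple} a lift has size a multiple of its parent while $m>1$. For the remaining cycles I would trace a cycle of size $>k$ downward through the projections: projection cannot increase size and the size is $k$ at level $1$, so the size first exceeds $k$ by a jump from $k$ to $mk$ via Theorem \ref{thm:main-0}(3); from that level on the multiplier equals $1$ (Lemma \ref{lem:property-multiplier}), so Theorem \ref{thm:main-0}(2) can only multiply the size by $p$, giving the stated form $kmp^{j}$.

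Part (3) is where I expect the real work. Since $\bar\lambda(C)=1$, the multiplier equals $1$ at every level (Lemma \ref{lem:property-multiplier}), so each step is governed by Theorem \ref{thm:main-0}(2) and the size either stays fixed or is multiplied by $p$, the latter precisely when $r_v\neq 0$. The crux is that, once the size has reached $kp$, it cannot stall; equivalently, that ``$r_v\neq 0$'' is preserved under lifting, which is exactly Lemma \ref{lem:property-r}(3) and is where the hypothesis $p>3$ enters. Granting this persistence, the set of levels with $r_v=0$ is an initial segment: the size equals $k$ up to a threshold and then grows by a factor $p$ at every step, and imposing that the value $kp$ is attained at level $N$ fixes the threshold and yields $kp^{\,n-N+1}$ for all $n\geq N$. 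The main obstacle is therefore precisely the persistence of $r_v\neq 0$; the remark following Lemma \ref{lem:property-r} shows that for $p=2,3$ the quantity $r_v$ can return to $0$ and the size can stall, so the argument genuinely depends on Lemma \ref{lem:property-r}(3) and on $p>3$.
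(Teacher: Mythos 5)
Your proposal is correct and follows essentially the same route as the paper: induct level by level, use Lemma \ref{lem:property-multiplier} to track which case of Theorem \ref{thm:main-0} applies at each step, count vertices ($k+\frac{p-1}{m}\cdot mk=kp$) for part (2), and invoke Lemma \ref{lem:property-r}(3) for the persistence of $r_v\neq 0$ in part (3). The only cosmetic difference is that in part (4) you phrase the induction on individual cycles and trace sizes downward, while the paper carries the full inductive invariant on the lifted graph $G_i$ (exactly one size-$k$ cycle with multiplier $\bar\lambda\neq 0,1$, all others of size $kmp^j$ with multiplier $1$); the content is the same.
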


\begin{proof}
(1) %
It is due to recursively using 
Theorem \ref{thm:main-0} (1) and Lemma \ref{lem:property-multiplier}. 

(2) %
By Theorem \ref{thm:main-0} (2)(3)
we see the sum of the sizes of all the lifted cycles is $kp$, 
which clearly equals the number of vertices of the lifted graph.

(3) %
Consider a sequence of cycles 
$C_1, C_{2}, C_3, \cdots$ such that
$C_{i}$ is a lifted cycle in $G(f, \ZZ_{p^{i}})$ of $C_{i-1}$, where $C_1=C$.
Let $v_i$ be a vertex in $C_i$ and put $r_i := r_{v_i}$.

Now by Theorem \ref{thm:main-0} (2)  %
we can see the size of 
$C_{n}$ 
is $k$ as long as $r_i=0$ for all $i<n$. 
So the size of $C_N$ being $kp$ means $r_{N-1} \neq 0$.  
By Lemma \ref{lem:property-r} (3) once $r_{N-1}\neq 0$, 
we must have $r_n \neq 0$ for $n>N-1$ 
and by the theorem the size of $C_n$ is $p$ times that of $C_{n-1}$ 
for $n > N-1$, which proves the statement.

(4) %
Let $G_1, G_2, G_3, \cdots$ be the sequence of the lifted graphs such that
$G_{i}$ is the lifted graph in $G(f, \ZZ_{p^{i}})$ of $G_{i-1}$, where $G_1=C$.
The proof can be done by induction as follows.

Assumption: in $G_i$ there exists exactly one cycle (denoted by $\hat{C_i}$) of size $k$ with multiplier $\bar\lambda:=\bar\lambda(\hat{C_i}) \neq 0, 1$,  
and each of the other cycles is of size $kmp^j$ for some $j\geq 0$ with multiplier $1$.

Then the lifted cycles in $G_{i+1}$ of $\hat{C_i}$ consist of one cycle of size $k$ and cycles of size $km$ 
by Theorem \ref{thm:main-0} (3). 
The former has multiplier $\bar\lambda$, 
and the latter have multiplier $1$ by Lemma \ref{lem:property-multiplier}. 
Now let $D_i$ be any other cycle than $\hat{C_i}$ in $G_i$, and then 
the lifted cycles in $G_{i+1}$ of $D_i$ clearly have size $kmp^j$ for some $j$ with multiplier $1$ 
by Theorem \ref{thm:main-0} (2). 
We have confirmed that the assumption holds for $G_{i+1}$.

In the initial case the assumption clearly holds, thus the proof completes.
\end{proof}

\begin{rem}
In the proof of (4), since another cycle than $\hat{C_i}$ has multiplier $1$, 
we can use the same argument as in the proof of (3) to give more details for $j$.
That is, assuming $p>2$, if a lifted cycle $C'$ in $G(f, \ZZ_{p^N})$ of $C$,  
is of size $kmp$ for some $N>2$, 
then any lifted cycle $C''$ in $G(f, \ZZ_{p^n})$ of $C'$ 
is of size $kmp^{n-N+1}$ for $n\geq N$.
This fact joining with (3) is a version of Theorem 2 in \cite{yoshioka_properties_2018}, 
not limited to the Chebyshev polynomials.
\end{rem}

\medskip

\bibliographystyle{plain}

\raggedright  %
% \bibliography{ref-bibtex-ab}

\end{document}